\documentclass[12pt,centertags]{amsart}
\usepackage{amsmath,amstext,amsthm,a4,amssymb,amscd}
\usepackage[mathscr]{eucal}
\usepackage{mathrsfs}
\usepackage{epsf}
\textwidth 15.8cm
\textheight 23.5cm
\topmargin -0cm
\oddsidemargin 0.1cm
\evensidemargin 0.1cm
\parskip 0.0cm

\numberwithin{equation}{section}

\def\mE{\mathcal{E}}
\def\mF{\mathcal{F}}

\def\mM{\mathcal{M}}

\newtheorem{thm}{Theorem}[section]

\newtheorem{prop}[thm]{Proposition}

\theoremstyle{definition}
\newtheorem{rem}[thm]{Remark}
\theoremstyle{definition}

\theoremstyle{definition}
\newtheorem{defn}[thm]{Definition}
\newcommand{\be}{\begin{eqnarray}}
\newcommand{\ee}{\end{eqnarray}}

\newcommand{\comment}[1]{}

\begin{document}

\title{Positive scalar curvature on foliations: the enlargeability}
 
\author{Weiping Zhang}

\address{Chern Institute of Mathematics \& LPMC, Nankai
University, Tianjin 300071, P.R. China}
\email{weiping@nankai.edu.cn}

\begin{abstract}  We generalize the famous result of Gromov and 	Lawson on the nonexistence of metric of positive scalar curvature on enlargeable manifolds to the case of foliations, without using index theorems on noncompact manifolds. 
￼
\end{abstract}

\maketitle

\setcounter{section}{-1}

\section{Introduction} \label{s0}

It has been an important subject in differential  geometry  to study when a smooth manifold carries a Riemannian metric of positive scalar curvature (cf. \cite[Chap. IV]{LaMi89}).  A famous result of Gromov and Lawson \cite{GL80}, \cite{GL83} states that an enlargeable   manifold (in the sense of \cite[Definition 5.5]{GL83})  does not carry a metric of positive scalar curvature.  In particular, there is no metric of positive scalar curvature on any torus, which is a classical result of Schoen-Yau \cite{SY79} and Gromov-Lawson \cite{GL80}. A generalization to foliations  of the Schoen-Yau and Gromov-Lawson result on torus has been given in \cite[Corollary 0.5]{Z17}.  
In this paper, we further extend the above result of Gromov-Lawson on enlargeable manifolds  to the case of foliations.

Let $F $ be an
integrable subbundle of the tangent vector bundle $TM$ of a closed smooth manifold $M$. 
Let $g^F$ be a
  Euclidean metric  on $F$, and
$k^F\in C^\infty(M)$  be  the associated leafwise scalar curvature  (cf. \cite[(0.1)]{Z17}).  For any covering manifold $\pi:\widetilde M\rightarrow M$, one has a lifted integrable subbundle with metric  $(\widetilde F,g^{\widetilde F})=(\pi^*F,\pi^*g^F)$. 

\begin{defn}\label{t0.1} 
One calls $(M, F)$ an enlargeable foliation if for any $\varepsilon>0$, there is a   covering manifold $\widetilde M\rightarrow M$ and a smooth map $f:\widetilde M\rightarrow S^{\dim M}(1)$ (the standard unit sphere), which is constant near infinity and has non-zero degree, such that for any $X\in \Gamma(\widetilde F)$, $|f_*(X)|\leq \varepsilon |X|$.
\end{defn}

When $F=TM$ and $  M$ is spin, this is the original definition of the enlargeability of $M$ due to Gromov and Lawson \cite{GL80}, \cite{GL83}.

The main result of this paper can be stated as follows.

\begin{thm}\label{t0.2}   Let $(M,F)$ be an enlargeable foliation. Then (i): if $T  M$ is spin, then there is no $g^F$ such that  $k^F>0$  over $M$; (ii): if $  F$ is spin, then there is no $g^F$ such that  $k^F>0$  over $M$. 

\end{thm}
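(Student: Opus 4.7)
The plan is to transplant the Gromov--Lawson strategy \cite{GL83} to the foliation setting, following the Dirac--Lichnerowicz philosophy already deployed in \cite{Z17}: construct from the enlargeability data a coefficient bundle on $M$ that is almost flat along $F$, twist the appropriate spin Dirac operator by it, and derive a contradiction between a leafwise Lichnerowicz vanishing and a foliation-index non-vanishing. All analysis will live on the compact manifold $M$ itself, so the noncompactness of $\widetilde M$ will enter only through the geometric construction of the coefficient bundle, not through any index theorem on $\widetilde M$.

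Fix $\varepsilon>0$.  By Definition~\ref{t0.1}, there exist $\pi:\widetilde M\to M$ and $f:\widetilde M\to S^{\dim M}(1)$ with the stated properties.  Choose a Hermitian bundle with unitary connection $(\xi,\nabla^\xi)\to S^{\dim M}$ of non-zero top Chern number, trivialised in a neighbourhood of the value of $f$ at infinity, and pull it back to obtain $(\widetilde E,\nabla^{\widetilde E})=f^*(\xi,\nabla^\xi)$ on $\widetilde M$; this bundle is trivial near infinity and, by the leafwise contraction hypothesis, satisfies $|R^{\widetilde E}|_{\widetilde F}|\leq C\varepsilon^2$ pointwise.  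When $\pi$ is a finite covering, form the direct image $E=\pi_*\widetilde E$ on $M$, which inherits an almost-flat-along-$F$ Hermitian connection.  When $\pi$ is infinite, use in its place a Mishchenko-type construction to obtain a Hilbert $C^*_r\Gamma$-module bundle on $M$ with the same leafwise curvature control, exploiting that $\widetilde E$ is supported, up to trivialisation, on a compact piece of $\widetilde M$; this is where the slogan ``without index theorems on noncompact manifolds'' bites.

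Assume for contradiction that $k^F\geq\kappa_0>0$.  In case (ii), with $F$ spin, let $D_F^E$ denote the leafwise Dirac operator on leafwise spinors twisted by $E$.  The leafwise Lichnerowicz formula reads
\[
\bigl(D_F^E\bigr)^2 \;=\; \bigl(\nabla^E\bigr)^*\nabla^E \;+\; \tfrac{k^F}{4} \;+\; c\bigl(R^E|_F\bigr),
\]
whose last term has operator norm $O(\varepsilon^2)$.  For $\varepsilon$ small enough, $(D_F^E)^2\geq \kappa_0/8$ pointwise along leaves, forcing the (possibly $C^*_r\Gamma$-valued) Connes--Skandalis foliation index of $D_F^E$ to vanish.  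In case (i), with $TM$ spin but not necessarily $F$, I would invoke the adiabatic-metric / sub-Dirac construction of \cite{Z17}: rescale the transverse part of a chosen splitting metric on $TM$ by $\varepsilon^{-1}$, so that the square of the resulting Dirac-type operator is dominated by $k^F/4$ up to a small error, and run the same positivity argument.

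To conclude, apply the foliation index theorem to evaluate the topological side of the index of $D_F^E$: in both cases it is a characteristic-class pairing which, by naturality of $\ch$ and our choice of $\xi$, reduces to a non-zero universal multiple of $\deg(f)$ times the top Chern number of $\xi$, hence is non-zero.  This contradicts the vanishing above.  The main obstacle is the infinite-covering case of the first step: producing a genuine almost-flat bundle on the compact base $M$ from data on the noncompact $\widetilde M$ without invoking index theory on $\widetilde M$ seems to require either a $C^*$-algebra-coefficient device or a careful sub-covering/approximation argument.  A secondary subtlety is the adiabatic argument in case (i), where the transverse curvature contributions must be controlled finely enough that positivity of $k^F$ survives in the limit.
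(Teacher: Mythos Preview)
Your overall philosophy---pull back an almost-flat bundle, run Lichnerowicz, contradict an index---is correct, and for the compactly enlargeable case your proposal would go through (though the paper simply works on the compact cover $\widetilde M$ itself rather than pushing forward to $M$; both are fine). The substantive gap is in the noncompact case, and you have correctly identified it yourself: the Mishchenko/$C^*_r\Gamma$-bundle device you gesture at is not developed, and it is exactly the step the paper replaces by a concrete, elementary construction you have not found.

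The paper's key idea for noncompact $\widetilde M$ is a \emph{doubling plus potential deformation}. One cuts $\widetilde M$ along a compact hypersurface $H$ outside the support of $f$, doubles the resulting compact manifold-with-boundary to get a closed manifold $\widehat M_H$, and extends $f$ trivially to the second copy. One then pulls back not just $E_0$ but the $\mathbf{Z}_2$-graded pair $E_0\oplus E_1$ with $E_1$ trivial of the same rank, together with an odd endomorphism $V$ invertible near $x_0$, and studies the deformed operator $D^\xi+f^*V$. On the original half, Lichnerowicz plus the leafwise $\varepsilon$-smallness of both $R^\xi$ and $[\nabla^\xi,f^*V]$ gives positivity; on the glued half, $f^*V$ is invertible and $[\nabla^\xi,f^*V]=0$, so the potential term dominates. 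A partition-of-unity estimate then yields $\ker(D^\xi+f^*V)=0$, contradicting the ordinary Atiyah--Singer index on the closed manifold $\widehat M_H$. No $C^*$-algebra coefficients, no relative index theorem, no index theory on noncompact spaces.

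A second divergence: for case (ii) you invoke the leafwise Dirac operator and the Connes--Skandalis foliation index. The paper avoids foliation index theory altogether; both (i) and (ii) are handled by sub-Dirac operators on the Connes fibration over (the double of a piece of) $\widetilde M$, and the only index theorem used is classical Atiyah--Singer on compact manifolds. Your route via the foliation index might be made to work, but extracting the numerical pairing $\langle\widehat A\,\ch(E),[M]\rangle$ from the Connes--Skandalis class is itself nontrivial and is precisely the kind of machinery the paper sets out to bypass.
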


When   $F=TM$, one recovers the classical theorem of Gromov-Lawson \cite{GL80}, \cite{GL83}  mentioned at the begining. 
In a recent paper  \cite{BH},  Benameur and Heitsch proved Theorem \ref{t0.2}(ii) under the condition that $(M,F)$ has a Hausdorff homotopy groupoid.

As a direct consequence of Theorem \ref{t0.2}(i), one obtains an alternate proof,  without using the families index theorem,  of  \cite[Corollary 0.5]{Z17} mentioned above (for the special  case where the integrable subbundle on torus is spin, this result is due to Connes, as was stated in \cite[p. 192]{Gr96}). 

If $M$ is enlargeable and $(M,F)$ carries a transverse Riemannian structure, then Theorem \ref{t0.2}(i) is trivial, as in this case, if there is $g^F$ with $k^F>0$ over $M$, then one can construct    $g^{TM}$ with $k^{TM}>0$ over $M$, which contradicts with the Gromov-Lawson theorem.  Thus, the main difficulty for Theorem \ref{t0.2} is that there might be no transverse Riemannian structure on $(M,F)$. This is similar to what happens  in \cite{Co86} and \cite{Z17}, where one   adapts the Connes fibration constructed in \cite{Co86} to overcome this kind of difficulty. 
 
Recall that we have proved geometrically in \cite{Z17} that if $M$ is oriented and there exists $g^F$ with $k^F>0$ over $M$, then  under the condition that either $TM$ or $F$ is spin,  one has $\widehat A(M)=0$.  The case where  $F$ is spin is a famous result of Connes \cite[Theorem 0.2]{Co86}. 
  
 Our proof of Theorem \ref{t0.2}   combines  the   methods in \cite{GL80}, \cite{GL83} and \cite{Z17}.  It is based on deforming (twisted) sub-Dirac operators on the Connes fibration over $\widetilde M$.  A notable difference with respect to \cite{GL83}, where the relative index theorem  on noncompact manifolds plays an essential role,  is that we will work with compact manifolds even for the noncompactly enlargeable situation. It will be carried out in Section \ref{s2}.

\section{Proof of Theorem \ref{t0.2}}\label{s2}
 
  In this section, we first prove in Section \ref{s2.1} the easier  case where $(M,F)$ is a compactly enlargeable foliation, i.e., the covering manifold $\widetilde M$ in Definition \ref{t0.1} is compact.  Then in Section \ref{s2.3} we show how to extend the arguments in Section \ref{s2.1} to the case where $\widetilde M$ is noncompact. 
 
\subsection{The case of compactly enlargeable foliations
}\label{s2.1} 

Let $F$ be an integrable subbundle of the tangent bundle $TM$ of an oriented closed manifold $M$.

Let $g^F$ be a metric on $F$ and  $k^F$ be the scalar curvature of $g^F$. 
Let $(E,g^E)$ be a Hermitian vector bundle on $M$ carrying a Hermitian connection $\nabla^E$. Let $R^E=(\nabla^E)^2$ be the curvature of $\nabla^E$. 

 For any $\varepsilon>0$, we say $(E,g^E,\nabla^E)$ verifies the leafwise $\varepsilon$-condition if for any $X,\,Y\in\Gamma(F)$, the following pointwise formula holds on $M$,
\begin{align}\label{1.1}
\left|R^E(X,Y)\right|\leq \varepsilon\,|X|\, |Y|. 
\end{align}

The following result extends slightly \cite[Theorem 0.1]{Z17} and \cite[Theorem 0.2]{Co86}.\footnote{The case where $F$ is spin is due to Connes, cf. \cite[p. 192]{Gr96}.}

\begin{thm}\label{t1.1}
If $k^F>0$ over $M$ and   either $TM$ or $F$ is spin, then there exists $\varepsilon_0>0$ such that if $(E,g^E,\nabla^E)$ verifies the leafwise $\varepsilon_0$-condition, then $\langle \widehat A(TM){\rm ch}(E),[M]\rangle=0$. 
\end{thm}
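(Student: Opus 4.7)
My plan is to adapt the sub-Dirac operator strategy on the Connes fibration, developed by Connes \cite{Co86} and extended in \cite{Z17}, to the twisted coefficient setting of the present statement. The first step is to replace $M$ by the Connes fibration $\pi:\mathcal{M}\to M$, whose fiber over $x\in M$ is the space of Euclidean metrics on $(TM/F)_x$. On the total space one has a canonical decomposition $T\mathcal{M} = \pi^*F \oplus \mathcal{F}_2 \oplus \mathcal{V}$, in which the vertical bundle $\mathcal{V}$ and a tautological horizontal lift $\mathcal{F}_2 \cong \pi^*(TM/F)$ carry canonical Euclidean metrics built from the fiber point itself, while $\pi^*F$ inherits $\pi^*g^F$. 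This creates a transverse Riemannian structure on the lifted foliation and bypasses its possible absence on $(M,F)$.

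Next, following the scheme of \cite{Z17}, I would introduce a one- or two-parameter family of rescaled metrics $g_\beta^{T\mathcal{M}}$ on $\mathcal{M}$ (contracting $\pi^*F \oplus \mathcal{F}_2$ relative to $\mathcal{V}$ by a small $\beta>0$) and, using the spin assumption on either $TM$ or on $F$, build a twisted sub-Dirac operator $D_{\pi^*E,\beta}$ on $\mathcal{M}$ with $\pi^*E$ as coefficient bundle. A Lichnerowicz-type computation patterned on the proof of \cite[Thm.~0.1]{Z17} should then yield a pointwise identity of the shape
\begin{equation*}
D_{\pi^*E,\beta}^{\,2} = \nabla^*\nabla + \tfrac14\,\pi^*k^F + R_\beta + c\bigl(R^{\pi^*E}\bigr),
\end{equation*}
where $R_\beta$ is a geometric remainder that is uniformly controlled as $\beta\to 0$, and where the twist term $c(R^{\pi^*E})$ is bounded pointwise by a constant multiple of $\varepsilon$ thanks to the leafwise $\varepsilon$-condition \eqref{1.1}. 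Since $k^F>0$ has a positive lower bound on the compact base $M$, for $\beta$ small enough and $\varepsilon_0$ small enough the right hand side is strictly positive everywhere on $\mathcal{M}$, forcing $\ker D_{\pi^*E,\beta} = 0$ and hence $\Ind(D_{\pi^*E,\beta}) = 0$.

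The last step is to identify $\Ind(D_{\pi^*E,\beta})$ with $\langle \widehat A(TM)\,\ch(E),[M]\rangle$ by an adiabatic/fiber-integration argument along $\pi$ in the spirit of \cite{Z17}, which collapses the construction to an ordinary Dirac-type index calculation on $M$ whose Atiyah--Singer answer is precisely that characteristic number. Combining this with the vanishing obtained in the previous paragraph yields the theorem.

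The step I expect to be the main obstacle is the non-compactness of the fibers of the Connes fibration $\pi$, which blocks a direct appeal to Atiyah--Singer index theory on $\mathcal{M}$. One has to either work on a compact truncation of $\mathcal{M}$ and control the boundary contribution, or exploit invariance under an appropriate (non-compact) symmetry of the fibers so as to descend to a compact quotient while preserving the Lichnerowicz identity. Arranging this compactification so that the positivity forcing the kernel to vanish survives and the index is still computable as $\langle \widehat A(TM)\,\ch(E),[M]\rangle$ is where the bulk of the technical work should lie.
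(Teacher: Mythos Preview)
Your proposal follows essentially the same route as the paper: twist the sub-Dirac operator of \cite{Z17} on the Connes fibration by $\pi^*E$, derive the twisted Lichnerowicz-type formula, and handle the non-compact fibers by the compact truncation scheme of \cite{Z17} (your first suggested option), which is exactly how the index is identified with $\langle\widehat A(TM)\ch(E),[M]\rangle$ there.

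One point in your sketch needs correction, and it is precisely the mechanism that makes the argument work. In the rescaled Lichnerowicz formula the leafwise scalar curvature enters as $\dfrac{k^{\mathcal F}}{4\beta^2}$, not $\dfrac14\,\pi^*k^F$, and the geometric remainder $R_\beta$ is \emph{not} uniformly controlled as $\beta\to 0$: in the paper it is $O_R\bigl(\tfrac{1}{\beta}+\tfrac{\varepsilon^2}{\beta^2}\bigr)$, which diverges. Positivity is obtained because $\beta^{-2}k^{\mathcal F}$ diverges \emph{faster} than the remainder, not because the remainder stays bounded. Similarly the twist term carries the same $\beta^{-2}$ weight (it is $\tfrac{1}{2\beta^2}\sum R^{\pi^*E}(f_i,f_j)c(\cdot)c(\cdot)$), so the leafwise $\varepsilon_0$-condition is what makes it small \emph{relative to} $\beta^{-2}k^{\mathcal F}$, not small in absolute terms. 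Once you adjust the scaling hierarchy accordingly, the rest of your outline matches the paper.
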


\begin{proof} The proof of this theorem is an easy modification of the proof given in \cite{Z17} for the case of $E={\bf C}|_M$. We only give a brief description, by following the notations given in \cite{Z17}. 
Let $\delta>0$  be  such that $k^F\geq \delta$ over $M$.
Without loss of generality, we may well assume that $\dim M$, ${\rm rk}(F)$ are divisible by $4$, and that $TM$, $F$ and  $TM/F$ are oriented with compatible orientations.  

We assume first that $TM$ is spin.

Following  \cite[\S 5]{Co86} (cf. \cite[\S 2.1]{Z17}), let $\pi:\mM\rightarrow M$ be the Connes
fibration over $M$ such that for any $x\in M$, $\mM_x=\pi^{-1}(x)$
is the space of Euclidean metrics on the linear space $T_xM/F_x$.
Let  $T^V\mM$ denote the vertical tangent bundle of the fibration
$\pi:\mM\rightarrow M$. Then it carries a natural metric
$g^{T^V\mM}$.

By  using the Bott connection
  on $TM/F$, which is leafwise flat, one  lifts $F$ to an integrable subbundle
$\mF$ of $T\mM$. 
  Then $g^F$   lifts to a Euclidean metric $g^\mF=\pi^*g^F $ on $\mF$.

Let $\mF_1^\perp\subseteq T\mM$ be a subbundle, which is  transversal to $\mF\oplus T^V\mM$,   such that we have a
splitting $T\mM=(\mF \oplus T^V \mM)\oplus\mF_1^\perp$. Then
$\mF_1^\perp$ can be identified with $T\mM/(\mF \oplus T^V \mM)$
and carries a canonically induced metric $g^{\mF_1^\perp}$.
We denote  $\mF_2^\perp=T^V\mM$.

Let $\mE=\pi^*E$ be the lift of $E$ which carries the lifted Hermitian metric $g^\mE=\pi^*g^E$ and the lifted Hermitian connection $\nabla^\mE=\pi^*\nabla^E$.  Let $R^\mE=(\nabla^\mE)^2$ be the curvature of $\nabla^\mE$.

For any $  \beta,\ \varepsilon>0$,  following  \cite[(2.15)]{Z17}, let $g_{\beta,\varepsilon}^{T\mM}$ be the   metric    on $T\mM$  defined by
the
  orthogonal splitting,
\begin{align}\label{2.20}\begin{split}
       T\mM =   \mF\oplus \mF^\perp_1\oplus \mF^\perp_2,  \ 
\  \  \
g^{T\mM}_{\beta,\varepsilon}= \beta^2   g^{\mF}\oplus\frac{
g^{\mF^\perp_1}}{ \varepsilon^2 }\oplus g^{\mF^\perp_2}.\end{split}
\end{align}

Now we replace the sub-Dirac operator constructed in \cite[(2.16)]{Z17} by the obvious twisted (by $\mE$) analogue 
\begin{align}\label{2.21}
D^\mE_{\mF\oplus\mF_1^\perp,\beta,\varepsilon}:\Gamma\left(S_{\beta,\varepsilon} (\mF\oplus\mF_1^\perp)\widehat\otimes\Lambda^*\left(\mF_2^\perp\right)\otimes\mE\right)
\longrightarrow
\Gamma\left(S_{\beta,\varepsilon}(\mF\oplus\mF_1^\perp)\widehat\otimes\Lambda^*\left(\mF_2^\perp\right)\otimes \mE\right),
\end{align}
where $S_{\beta,\varepsilon}(\cdot)$ is the notation for the spinor bundle determined by  $g^{T\mM}_{\beta,\varepsilon}$.

The analogue of \cite[(2.28)]{Z17} now  takes the form 
\begin{multline}\label{2.39} 
 \left(D^\mE_{\mF\oplus\mF_1^\perp,\beta,\varepsilon}\right)^2=-\Delta^{\mE,\beta,\varepsilon}+\frac{k^\mF}{4\beta^2}+\frac{1}{2\beta^2}\sum_{i,\,j=1}^{{\rm rk}(F)}R^\mE(f_i,f_j)c_{\beta,\varepsilon}\left(\beta^{-1}f_i\right)c_{\beta,\varepsilon}\left(\beta^{-1}f_j\right)
\\
+
O_R\left(\frac{1}{\beta}+\frac{\varepsilon^2}{\beta^2}\right)
,
\end{multline}
where $-\Delta^{\mE,\beta,\varepsilon}\geq 0$ is the corresponding Bochner Laplacian,   $k^\mF=\pi^*k^F\geq \delta$ and $f_1,\,\cdots,\,f_{{\rm rk}(F)}$ is an orthonormal basis of $(\mF,g^\mF)$.  Moreover, the analogue of \cite[(2.34)]{Z17} now takes the   form
\begin{align}\label{1.4}
{\rm ind}\left(P_{R,\beta,\varepsilon,+}^\mE\right) =\left\langle\widehat A(TM){\rm ch}(E),[M]\right\rangle. 
\end{align}

From (\ref{1.1}), (\ref{2.39}), (\ref{1.4}) and proceed as in \cite[\S 2.2 and \S 2.3]{Z17}, one gets Theorem \ref{t1.1} for the case where $TM$ is spin easily. 
As in \cite[\S 2.5]{Z17}, the same proof applies to give a geometric proof for the case where $  F$ is spin, with an obvious modification of the (twisted) sub-Dirac operators   (cf. \cite[(2.58)]{Z17}). 
\end{proof}

Now for the proof of Theorem \ref{t0.2},  one follows \cite{GL80}, \cite{LaMi89} and chooses a complex vector bundle $E_0$ over $S^{\dim M}(1)$ such that 
\begin{align}\label{1.6}
\left\langle {\rm ch}\left(E_0\right),\left[S^{\dim M}(1)\right]\right\rangle\neq 0. 
\end{align}

From Definition \ref{t0.1} and \cite[(5.8) of Chap. IV]{LaMi89}, one sees that  for any $\varepsilon>0$, one can find a compact covering $\widetilde M\rightarrow M$ and a map $f:\widetilde M\rightarrow S^{\dim M}(1)$ of non-zero degree such that $E= f^*(E_0)$ verifies the leafwise $\varepsilon$-condition. Thus, if there is $g^F$ with $k^F>0$ over $M$, then  by Theorem \ref{t1.1} and in view of either  \cite[Theorem 0.1]{Z17}  (in the case where $  M$ is spin) or \cite[Theorem 0.2]{Co86} (in the case where $  F$ is spin),  one has
\begin{multline}\label{1.7}
0=\left\langle\widehat A\left(T\widetilde M\right){\rm ch}  \left(E\right) ,\left[\widetilde M\right]\right\rangle
=({\rm rk}(E_0))\widehat A\left(\widetilde M\right)
\\
+\left\langle\widehat A\left(T\widetilde M\right)  f^*\left( {\rm ch}\left(E_0\right)-{\rm rk}\left(E_0\right)\right),\left[\widetilde M\right]\right\rangle
={\rm deg}(f) \,\left\langle{\rm ch}\left(E_0\right),S^{\dim M}(1)\right\rangle ,
\end{multline}
where the last equality comes from the definition of ${\rm deg}(f)$, as ${\rm ch}(E_0)-{\rm rk}(E_0)$ is a top form on $S^{\dim M}(1)$. This contradicts with  (\ref{1.6}) and  completes the  proof of Theorem \ref{t0.2} for compact $\widetilde M$.

\begin{rem}\label{t1.2}
Since  any torus $T^n$ is compactly enlargeable (cf. \cite[p. 303]{LaMi89}), the proof above already applies to give an alternate proof of \cite[Corollary 0.5]{Z17} on the nonexistence of any foliation  with metric of positive leafwise scalar curvature on $T^n$. 
\end{rem}

\subsection{The case where $\widetilde M$ is noncompact}\label{s2.3}

We will deal with the case where $F=TM$  in detail.  We will work with compact manifolds, thus giving a new proof of the Gromov-Lawson theorem \cite[Theorem 5.8]{GL83} in the case where $\widetilde M$ is noncompact. With this ``compact" approach it is easy to    prove  the foliation extension as in  Section \ref{s2.1}.
 
We assume  that $\widetilde M$ is noncompact.  To simplify the  notation, from now on we simply denote $\widetilde M$ by $M$, or rather $M_\varepsilon$ to emphasize the dependence on $\varepsilon$. The key point is that the geometric data on $M$ now comes from isometric  liftings of  geometric data on a compact manifold.

Thus for any $\varepsilon>0$, let $f_\varepsilon:  M_\varepsilon\rightarrow S^{\dim M}(1)$ be as in Definition \ref{t0.1}. Let $K_\varepsilon\subset  M_\varepsilon$ be a compact subset of $M_\varepsilon$ such that 
$f(  M_\varepsilon\setminus K_\varepsilon)=x_0$, where $x_0 $ is a (fixed) point on $S^{\dim M}(1)$.\footnote{Up to an isometry of $S^{\dim M}(1)$, one can always  assume that $x_0$ is fixed and does not depend on $\varepsilon$.}  Following \cite{GL83}, we take a compact hypersurface $H_\varepsilon$  in $M_\varepsilon\setminus K_\varepsilon$. We denote by $M_{H_\varepsilon}$ the compact manifold with boundary $H_\varepsilon$ containing $K_\varepsilon$.

Let $M_{H_\varepsilon}'$ be another copy of $M_{H_\varepsilon}$. We glue $M_{H_\varepsilon}$ and $M_{H_\varepsilon}'$ along $H_\varepsilon$ to get the double, which we denote by $\widehat M_{H_\varepsilon}$. Let $g^{T\widehat M_{H_\varepsilon}}$ be a metric on $T\widehat M_{H_\varepsilon}$ such that $g^{T\widehat M_{H_\varepsilon}}|_{M_{H_\varepsilon}}=g^{TM}|_{M_{H_\varepsilon}}$. The existence of $g^{T\widehat M_{H_\varepsilon}}$ is clear.\footnote{Here we need not assume that  $g^{T\widehat M_{H_\varepsilon}}$ is of product structure near $M_{H_\varepsilon}$.} Let $S(T\widehat M_{H_\varepsilon})$ denote the corresponding spinor bundle. 

We extend $f_\varepsilon:M_{H_\varepsilon}\rightarrow S^{\dim M}(1)$ to $f_\varepsilon:\widehat M_{H_\varepsilon}\rightarrow S^{\dim M}(1)$ by setting $f_\varepsilon(M_{H_\varepsilon}')=x_0$.

Let $(E_0,g^{E_0} )$ be a Hermitian vector bundle on $S^{\dim M}(1)$ verifying (\ref{1.6}) and carrying a Hermitian connection $\nabla^{E_0}$.   Let $(E_1={\bf C}^k|_{S^{\dim M}(1)},g^{E_1},\nabla^{E_1})$, with $k={\rm rk}(E_0)$, be the canonical Hermitian trivial vector bundle on $S^{\dim M}(1)$.  Let $v:\Gamma(E_0)\rightarrow \Gamma (E_1)$ be an endomorphism such that $v|_{x_0}$ is an isomorphism. Let $v^*: \Gamma(E_1)\rightarrow \Gamma(E_0)$ be the adjoint of $v$ with respect to   $g^{E_0}$ and $g^{E_1}$. Set 
\begin{align}\label{1.11}
V=v+v^*. 
\end{align}
Then the self-adjoint endomorphism $V:\Gamma(E_0\oplus E_1)\rightarrow \Gamma(E_0\oplus E_1)$ is invertible near $x_0$.

Let $(\xi,g^\xi,\nabla^\xi)=(\xi_0\oplus\xi_1,g^{\xi_0}\oplus g^{\xi_1},\nabla^{\xi_0}\oplus\nabla^{\xi_1})=(f_\varepsilon^*E_0\oplus f_\varepsilon^*E_1, f_\varepsilon^*g^{E_0}\oplus f_\varepsilon^*g^{E_1},f_\varepsilon^*\nabla^{E_0}\oplus f_\varepsilon^*\nabla^{E_1})$ be the  ${\bf Z}_2$-graded Hermitian vector bundle with Hermitian connection over $\widehat M_{H_\varepsilon}$ (here for simplicity, we do not make explicit the subscript $\varepsilon$ in $\xi$, $\xi_0$ and $\xi_1$). Let $R^\xi=(\nabla^\xi)^2$ be the curvature of $\nabla^\xi$.
Set $V_{f_\varepsilon}=f_\varepsilon^*V$. Then   
\begin{align}\label{1.10}
  \left[\nabla^\xi,V_{f_\varepsilon}\right]=0
\end{align}
on $M_{H_\varepsilon}'$.

Let $D^\xi:\Gamma(S(T\widehat M_{H_\varepsilon})\widehat\otimes\xi)\rightarrow \Gamma(S(T\widehat M_{H_\varepsilon})\widehat\otimes \xi)$ be the canonically defined (twisted) Dirac operator (cf. \cite{LaMi89}). Let $D^\xi_\pm: \Gamma((S(T\widehat M_{H_\varepsilon})\widehat\otimes\xi)_\pm)\rightarrow \Gamma((S(T\widehat M_{H_\varepsilon})\widehat\otimes \xi)_\mp)$ be the obvious restrictions, where $(S(T\widehat M_{H_\varepsilon})\widehat\otimes\xi)_+=S_+(T\widehat M_{H_\varepsilon})\otimes\xi_0\oplus S_-(T\widehat M_{H_\varepsilon})\otimes \xi_1$, while $(S(T\widehat M_{H_\varepsilon})\widehat\otimes\xi)_-=S_-(T\widehat M_{H_\varepsilon})\otimes\xi_0\oplus S_+(T\widehat M_{H_\varepsilon})\otimes \xi_1$. By the Atiyah-Singer index theorem \cite{ASI} (cf. \cite{LaMi89})  and \cite{GL83}, one has 
\begin{align}\label{1.12}
{\rm ind}\left(D^\xi_+\right)=\left\langle\widehat A\left(T\widehat M_{H_\varepsilon}\right)\left({\rm ch}\left(\xi_0\right)-{\rm ch}\left(\xi_1\right)\right),\left[\widehat M_{H_\varepsilon}\right]\right\rangle = ({\rm deg}(f_\varepsilon))\left\langle {\rm ch}\left(E_0\right),\left[S^{\dim M}(1)\right]\right\rangle,
\end{align}
where the last equality comes from the definition of ${\rm deg}(f_\varepsilon)$ (cf. \cite{GL83}).

Let $k^{TM}$ denote the scalar curvature of $g^{TM}$. We assume that  there is $\delta>0$ such that $k^{TM}\geq \delta$ over $M$. 

For any $\varepsilon>0$, let $D_\varepsilon^\xi: \Gamma(S(T\widehat M_{H_\varepsilon})\widehat\otimes\xi)\rightarrow \Gamma(S(T\widehat M_{H_\varepsilon})\widehat\otimes \xi)$ be the deformed operator defined by
\begin{align}\label{1.13}
D^\xi_\varepsilon =D^\xi +  {V_{f_\varepsilon} }  
. 
\end{align}

\begin{prop}\label{t1.3}
There is  $\varepsilon_0>0$ such that for any $0<\varepsilon\leq\varepsilon_0$, one has $\ker (D^\xi_\varepsilon)=\{0\}$. 
\end{prop}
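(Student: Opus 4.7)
The plan is a Lichnerowicz--Bochner positivity argument, tuned to the fact that the scalar curvature $k^{TM}$ is well-controlled only on the half $M_{H_\varepsilon}$, while on the reflected half $M'_{H_\varepsilon}$ the deformation $V_{f_\varepsilon}$ supplies a positive potential. First I would expand
\begin{align*}
(D^\xi_\varepsilon)^2 = (D^\xi)^2 + \{D^\xi,V_{f_\varepsilon}\} + V_{f_\varepsilon}^2,
\end{align*}
note that $V_{f_\varepsilon}=v+v^*$ is an odd self-adjoint endomorphism of $\xi=\xi_0\oplus\xi_1$ in its $\mathbb Z_2$-grading, and identify $\{D^\xi,V_{f_\varepsilon}\}$ with the Clifford contraction $c(\nabla V_{f_\varepsilon})$ (odd $V$ anticommutes with Clifford action in the graded tensor product, so the usual commutator formula for a bundle endomorphism becomes an anticommutator identity). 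Combined with the classical Lichnerowicz formula for $(D^\xi)^2$, this yields
\begin{align*}
(D^\xi_\varepsilon)^2 = -\Delta^\xi + \tfrac14 k^{TM} + \tfrac12\sum_{i,j}c(e_i)c(e_j)R^\xi(e_i,e_j) + c(\nabla V_{f_\varepsilon}) + V_{f_\varepsilon}^2.
\end{align*}

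Next I would estimate the endomorphism part separately on the two halves. On $M_{H_\varepsilon}$ the metric is lifted from the compact base, so $k^{TM}\ge\delta$; the enlargeability bound $|f_{\varepsilon*}X|\le\varepsilon|X|$ gives $|R^\xi|=|f_\varepsilon^*R^{E_0\oplus E_1}|=O(\varepsilon^2)$ and $|\nabla V_{f_\varepsilon}|=|f_\varepsilon^*\nabla V|=O(\varepsilon)$, so the last three terms above contribute $\ge -C\varepsilon$, which is dominated by $\delta/8$ once $\varepsilon$ is small. On $M'_{H_\varepsilon}$ the map $f_\varepsilon$ is the constant $x_0$, hence $R^\xi\equiv 0$ and $\nabla V_{f_\varepsilon}\equiv 0$ (this is exactly (1.10)), while $V_{f_\varepsilon}^2=V(x_0)^2\ge c_0\,\mathrm{Id}$ for some fixed $c_0>0$, because $V(x_0)=v(x_0)+v(x_0)^*$ is a self-adjoint isomorphism.

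Integrating $\langle(D^\xi_\varepsilon)^2\sigma,\sigma\rangle=0$ for $\sigma\in\ker(D^\xi_\varepsilon)$ and discarding the nonnegative Bochner contribution would then give
\[
0\ge\int_{M_{H_\varepsilon}}\tfrac{\delta}{8}\,|\sigma|^2 + \int_{M'_{H_\varepsilon}}\bigl(c_0+\tfrac14 k^{TM}\bigr)|\sigma|^2,
\]
forcing $\sigma\equiv 0$ provided the $M'_{H_\varepsilon}$ integrand is positive. The one genuine obstacle is that $k^{TM}$ on the reflected half is fixed only by the arbitrary metric extension across $H_\varepsilon$ and may a priori be very negative. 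I would absorb this by rescaling $v\mapsto tv$ at the outset for a constant $t$ large enough that $t^2c_0>\tfrac14\sup_{\widehat M_{H_\varepsilon}}|k^{TM}|$: this leaves (1.10) intact on $M'_{H_\varepsilon}$ and merely multiplies the $O(\varepsilon)$ error on $M_{H_\varepsilon}$ by a fixed constant, so choosing $\varepsilon_0$ small in terms of $t$ and $\delta$ makes the displayed inequality strict for nonzero $\sigma$ and yields $\ker(D^\xi_\varepsilon)=\{0\}$.
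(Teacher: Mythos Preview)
Your Lichnerowicz--Bochner expansion and the split analysis on the two halves are the right ingredients, but the fix you propose for the uncontrolled scalar curvature on $M'_{H_\varepsilon}$ is circular. You want a single constant $t$ with $t^2c_0>\tfrac14\sup_{\widehat M_{H_\varepsilon}}|k^{TM}|$ chosen ``at the outset'', and then an $\varepsilon_0$ small in terms of $t$. But the double $\widehat M_{H_\varepsilon}$, its metric extension, and hence $\sup_{\widehat M_{H_\varepsilon}}|k^{TM}|$ all vary with $\varepsilon$; nothing in the setup bounds this supremum uniformly in $\varepsilon$. If you let $t=t(\varepsilon)$ instead, the cross term $c(\nabla V_{f_\varepsilon})$ on $M_{H_\varepsilon}$ becomes $O(t(\varepsilon)\varepsilon)$, and you would need $t(\varepsilon)\varepsilon\to 0$ together with $t(\varepsilon)^2\gtrsim\sup_{M'_{H_\varepsilon}}|k^{TM}|$, which again requires an a priori growth bound on the extended curvature that you have not supplied. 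One could try to manufacture such a bound by choosing the metric extension carefully (the covering metric has bounded geometry, so a uniform collar-smoothing argument is plausible), but that is a genuine additional step, and the paper's footnote stresses that its proof works for \emph{any} smooth extension.

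The paper sidesteps the whole issue by never invoking Lichnerowicz on $M'_{H_\varepsilon}$. It fixes a neighborhood $U_{x_0}$ of $x_0$ on the sphere where $V^2\ge\delta_1$, takes a bump $\psi$ supported in $U_{x_0}$ with $\psi\equiv 1$ near $x_0$, and sets $\varphi_\varepsilon=1-f_\varepsilon^*\psi$. The resulting partition $\varphi_{\varepsilon,1}^2+\varphi_{\varepsilon,2}^2=1$ (à la Bismut--Lebeau) has $\mathrm{supp}(\varphi_{\varepsilon,1})\subset M_{H_\varepsilon}$ and $\mathrm{supp}(\varphi_{\varepsilon,2})\subset f_\varepsilon^{-1}(U_{x_0})$, and---crucially---$|d\varphi_{\varepsilon,i}|=O(\varepsilon)$ because the cutoffs are pulled back through the $\varepsilon$-contracting map $f_\varepsilon$. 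For $\varphi_{\varepsilon,1}s$ one uses your Lichnerowicz estimate on $M_{H_\varepsilon}$ with $k^{TM}\ge\delta$; for $\varphi_{\varepsilon,2}s$ one uses only the trivial bound $\|D^\xi(\varphi_{\varepsilon,2}s)\|^2\ge 0$ together with $V_{f_\varepsilon}^2\ge\delta_1$ and $[\nabla^\xi,V_{f_\varepsilon}]=O(\varepsilon)$ (identically zero on $M'_{H_\varepsilon}$). The scalar curvature on $M'_{H_\varepsilon}$ never enters, so no rescaling of $v$ and no control on the metric extension is needed.
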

\begin{proof} 

Recall that $x_0\in S^{\dim M}(1)$ is fixed and $V|_{x_0}$ is invertible. 
Let $U_{x_0}\subset S^{\dim M}(1)$ be a (fixed) sufficiently small open neighborhood of $x_0$  such that the following inequality holds on $U_{x_0}$,
\begin{align}\label{4.1}
V^2\geq \delta_1.
\end{align}
Let $\psi:S^{\dim 1}(1)\rightarrow [0,1]$ be a smooth function such that $\psi=1$ near $x_0$ and ${\rm Supp}(\psi)\subset U_{x_0}$.  Then $\varphi_\varepsilon=1-f_\varepsilon^*\psi$ is a smooth function on $M_\varepsilon$ (and thus on $M_{H_\varepsilon}$), which extends to a smooth function on $\widehat M_{H_\varepsilon}$ such that $\varphi_\varepsilon=0$ on $  M_{H_\varepsilon}'$. 

Following \cite[p. 115]{BL91}, let $\varphi_{\varepsilon,1},\,\varphi_{\varepsilon,2}:\widehat M_{H_\varepsilon}\rightarrow [0,1]$ be defined by 
\begin{align}\label{1.14}
\varphi_{\varepsilon,1}=\frac{\varphi_\varepsilon} {\left(\varphi_\varepsilon^2+(1-\varphi_\varepsilon)^2\right)^{\frac{1}{2}}},\ \ \ \varphi_{\varepsilon,2}=\frac{1-\varphi_\varepsilon} {\left(\varphi_\varepsilon^2+(1-\varphi_\varepsilon)^2\right)^{\frac{1}{2}}}.
\end{align}
Then $\varphi_{\varepsilon,1}^2+\varphi_{\varepsilon,2}^2=1$. Thus, for any $s\in \Gamma(S(T\widehat M_{H_\varepsilon})\widehat\otimes\xi)$, one has
\begin{align}\label{1.15}
\left\|D_\varepsilon^\xi s\right\|^2 = \left\|\varphi_{\varepsilon,1}D_\varepsilon^\xi s\right\|^2 +\left\|\varphi_{\varepsilon,2}D_\varepsilon^\xi s\right\|^2 ,
\end{align}
from which one gets
\begin{multline}\label{1.16}
\sqrt{2}\left\|D_\varepsilon^\xi s\right\| \geq \left\|\varphi_{\varepsilon,1}D_\varepsilon^\xi s\right\|+\left\|\varphi_{\varepsilon,2}D_\varepsilon^\xi s\right\|
\\
\geq  
\left\| D_\varepsilon^\xi \left(\varphi_{\varepsilon,1}s\right)\right\|+\left\| D_\varepsilon^\xi\left(\varphi_{\varepsilon,2} s\right)\right\|
-\left\|c\left(d\varphi_{\varepsilon,1}\right)s\right\|-\left\|c\left(d\varphi_{\varepsilon,2}\right)s\right\|,
\end{multline}
where we identify $d\varphi_{\varepsilon,i}$, $i=1,\,2$, with the gradient of $\varphi_{\varepsilon,i}$.  

Let $e_1,\,\cdots,\,e_{\dim M}$ be an orthonormal basis of $g^{T\widehat M_{H_\varepsilon}}$. Then by (\ref{1.13}), one has
\begin{align}\label{1.17}
\left(D_\varepsilon^\xi\right)^2=\left(D^\xi\right)^2+
\sum_{i=1}^{\dim M} c\left(e_i\right)\left[\nabla^\xi_{e_i},V_{f_\varepsilon}\right]+ {V_{f_\varepsilon}^2} . 
\end{align}

From    (\ref{1.17}), one has for $j=1,\, 2$ that
\begin{align}\label{1.18}
  \left\| D_\varepsilon^\xi\left(\varphi_{\varepsilon,j} s\right)\right\|^2 =  
   \left\| D^\xi\left(\varphi_{\varepsilon,j} s\right)\right\|^2
+ \sum_{i=1}^{\dim M} \left\langle c\left(e_i\right)\left[\nabla^\xi_{e_i},V_{f_\varepsilon}\right]\,\varphi_{\varepsilon,j}s,\,\varphi_{\varepsilon,j}s\right\rangle
+ \left\|\varphi_{\varepsilon,j}V_{f_\varepsilon}s\right\|^2. 
\end{align}

By the Lichnerowicz formula \cite{L63} (cf. \cite{LaMi89}), one has on $M_{H_\varepsilon}$ that 
\begin{align}\label{1.19}
  \left(D^\xi\right)^2 =-\Delta ^\xi+\frac{k^{TM}}{4}+\frac{1}{2}\sum_{i,\,j=1}^{\dim M}c\left(e_i\right)c\left(e_j\right) R^\xi\left(e_i,e_j\right),
\end{align}
where $\Delta^\xi$ is the corresponding Bochner Laplacian  and $k^{TM}\geq \delta$ by assumption.  

By Definition \ref{t0.1}  and proceeding as in \cite[(5.8) of Chap. IV]{LaMi89}, one finds on $M_{H_\varepsilon}$ that
\begin{multline}\label{1.20}
 \frac{1}{2}\sum_{i,\,j=1}^{\dim M}c\left(e_i\right)c\left(e_j\right) R^\xi\left(e_i,e_j\right)
+  \sum_{i=1}^{\dim M} c\left(e_i\right)\left[\nabla^\xi_{e_i},V_{f_\varepsilon}\right]
\\
= \frac{1}{2}\sum_{i,\,j=1}^{\dim M}c\left(e_i\right)c\left(e_j\right) 
f_\varepsilon^*\left(R^{E_0}\left({f_\varepsilon}_*e_i,{f_\varepsilon}_*e_j\right)\right)
+  \sum_{i=1}^{\dim M} c\left(e_i\right)f_\varepsilon^*\left(\left[\nabla^{E_0\oplus E_1}_{{f_\varepsilon}_*e_i},V\right]\right)
=O\left(\varepsilon \right). 
\end{multline}

On the other hand, for any $1\leq i\leq \dim M$,  one verifies that
\begin{align}\label{1.23}
  e_i\left(\varphi_\varepsilon\right)=-e_i \left(f_\varepsilon^*\psi\right)=-f_\varepsilon^*\left(\left({f_\varepsilon }_*e_i\right)(\psi)\right)=O(\varepsilon).
\end{align}
 
From (\ref{1.14}) and  (\ref{1.23}), one finds that for $i=1,\,2$,
\begin{align}\label{1.24}
  \left|c\left(d\varphi_{\varepsilon,i}\right)\right|=O(\varepsilon). 
\end{align}

From (\ref{1.10}), (\ref{4.1}),  (\ref{1.14}), (\ref{1.16}), (\ref{1.18})-(\ref{1.20}) and (\ref{1.24}), one deduces that there exists $\delta_2>0$ such that when $\varepsilon>0$ is sufficiently small, one has (compare with \cite[p. 1062]{Z17})
\begin{align}\label{1.21}
 \left\|D_\varepsilon^\xi s\right\|\geq\delta_2\|s\|, 
\end{align}
which completes the proof of Proposition \ref{t1.3}. 
\end{proof}

From Proposition \ref{t1.3}, one finds ${\rm ind}(D^\xi_+)=0$, which contradicts with (\ref{1.12}) where the right hand side is non-zero. Thus, there should be no $g^{TM}$ with $k^{TM}>0$ over $M$. This completes the proof of Theorem \ref{t0.2} for the case of $F=TM$ (which is the original Gromov-Lawson theorem \cite[Theorem 5.8]{GL83}), without using the relative index theorem on noncompact manifolds in \cite{GL83}. 

Now to prove Theorem \ref{t0.2}(i), one simply combines the method in Section \ref{s2.1} with the doubling and gluing tricks above. The details are easy to fill.   Theorem \ref{t0.2}(ii)  follows by modifying the sub-Dirac operator as in \cite[\S 2.5]{Z17}. 

The proof of Theorem \ref{t0.2} is completed. 

$\ $

\noindent{\bf Acknowledgments.} This work was partially supported by NNSFC.  The author would like thank the referee for   helpful suggestions.

\def\cprime{$'$} \def\cprime{$'$}
\providecommand{\bysame}{\leavevmode\hbox to3em{\hrulefill}\thinspace}
\providecommand{\MR}{\relax\ifhmode\unskip\space\fi MR }
\providecommand{\MRhref}[2]{%
  \href{http://www.ams.org/mathscinet-getitem?mr=#1}{#2}
}
\providecommand{\href}[2]{#2}

\end{document}